\algrenewcommand\algorithmicrequire{\textbf{Precondition:}} 
\algrenewcommand\algorithmicensure{\textbf{Postcondition:}}
\algrenewcommand\alglinenumber[1]{\footnotesize #1} 
\algnewcommand{\IIf}[1]{\State\algorithmicif\ #1\ \algorithmicthen} 
\algnewcommand{\EndIIf}{\unskip\ \algorithmicend\ \algorithmicif}
\renewcommand{\quad}{$~~~\;\;\;$}
\newtheorem{proposition}{Proposition}
\newcommand{\diag}{\mbox{\rm diag}}
\newcommand{\transp}{{^{\rm T}}}
\newcommand{\ri}{{\rm ri\,}}
\renewcommand{\R}{\mathbb{R}}
\newcommand{\matr}[1]{\begin{bmatrix} #1 \end{bmatrix}}    
\def\transp{^{\rm T}}
\newcommand{\ip}[2]{\left\langle #1 , #2 \right\rangle}    
\def\int{\mathrm{int}}
\providecommand{\newoperator}[3]{%
\newcommand*{#1}{\mathop{#2}#3}}
\newoperator{\argmax}{\mathsf{argmax}}{}
\newoperator{\argmin}{\mathsf{argmin}}{}
\newcommand{\1}{\mathbf{1}}
\author{Javier Pe\~na\thanks{Tepper School of Business,
Carnegie Mellon University, USA, {\tt jfp@andrew.cmu.edu}}
\and  Negar Soheili\thanks{College of Business Administration,  University of Illinois at Chicago, USA, {\tt nazad@uic.edu }}
}
\title{Computational performance of a projection and rescaling algorithm} 
\begin{document}
\maketitle

\abstract{
This paper documents a computational implementation of a {\em projection and rescaling algorithm} for finding most interior solutions to the pair of feasibility problems
\[
\text{find}  \; x\in L\cap\R^n_{+} \;\;\;\; \text{ and } \; \;\;\;\;
\text{find} \; \hat x\in L^\perp\cap\R^n_{+},
\]
where $L$ denotes a linear subspace in $\R^n$ and $L^\perp$ denotes its orthogonal complement.  The projection and rescaling algorithm is a recently developed method that combines a {\em basic procedure} involving only low-cost operations with a periodic {\em rescaling step.}
We give a full description of a MATLAB implementation of this algorithm and present multiple sets of numerical experiments on synthetic problem instances with varied levels of conditioning.  Our computational experiments provide promising evidence of the effectiveness of the projection and rescaling algorithm.  

Our MATLAB code is publicly available.  Furthermore, the simplicity of the algorithm makes a computational implementation in other environments completely straightforward.

}

\section{Introduction}

The projection and rescaling algorithm~\cite{PenaS16} is a recent polynomial-time algorithm designed for solving the polyhedral feasibility problem 
\begin{equation}\label{primal}
\text{find} \; x\in L\cap\R^n_{++},
\end{equation} 
where $L$ denotes a linear subspace in $\R^n$.  

The projection and rescaling algorithm works by combining two building blocks, namely a {\em basic procedure} and a {\em rescaling step} as follows.  Let $P_L:\R^n \rightarrow L$ denote the orthogonal projection onto $L$.  Within a bounded number of low-cost iterations, the basic procedure finds $z\in\R^n_{++}$ such that either 
\begin{equation}\label{SolvedCondition}
P_Lz \in \R^n_{++}
\end{equation}
or 
\begin{equation}\label{rescalingCondition}
\|(P_Lz)^+\|_1 \leq  {  \dfrac{1}{2} \|z\|_\infty,}
\end{equation}
where $(P_Lz)^+ = \max\{0,P_Lz\}$. If \eqref{SolvedCondition} holds, then $x = P_Lz \in L \cap \R^n_{++}$ is a solution to the original problem~\eqref{primal}.  On the other hand, if \eqref{rescalingCondition} holds {   and $z_i = \|z\|_\infty$ then for every feasible solution $x$ to~\eqref{primal} we have 
\[x_i  \le \frac{1}{\|z\|_\infty}\ip{z}{x} = \frac{1}{\|z\|_\infty}\ip{z}{P_Lx} = \frac{1}{\|z\|_\infty}\ip{P_L z}{x} \le \frac{1}{\|z\|_\infty}\|(P_L z)_+\|_1 \cdot\|x\|_\infty \leq \frac{1}{2} \|x\|_\infty.\] 
In other words, if \eqref{rescalingCondition} holds and $z_i = \|z\|_\infty$ then all solutions $x$ to~\eqref{primal} have small $i$-th component.} The rescaling step takes $D:= I + e_ie_i\transp$ and transforms problem~\eqref{primal} into the following equivalent rescaled problem:
\begin{equation}\label{rescaled}
\text{find} \; x\in D(L)\cap\R^n_{++}.
\end{equation} 
{   Observe that the solutions to the rescaled problem \eqref{rescaled} are in one-to-one correspondence with the solutions to~\eqref{primal} via doubling of the $i$-th component.}

As it is easy to see and detailed in~\cite{PenaS16}, when $D$ is as above,  the rescaled problem~\eqref{rescaled} is better conditioned than~\eqref{primal} in the following sense.  If $L \cap \R^n_{++}\ne \emptyset$ then
$
\delta(D(L)\cap \R^n_{++}) = 2 \delta(L \cap \R^n_{++}) 
$
where $\delta(L\cap\R^n_{++})$ is the following {\em condition measure} of the problem~\eqref{primal}: 
\begin{equation}\label{eq:ConditionMeasure}
\delta(L\cap\R^n_{++}) := \max\left\{\prod_{j=1}^n x_j: x\in L\cap\R^n_{++}, \|x\|_\infty = 1\right\}. 
\end{equation}
By convention $\delta(L \cap \R^n_{++}) = -\infty$ when $L \cap \R^n_{++} = \emptyset$.  Observe that $\delta(L \cap \R^n_{++}) \le 1$ is a measure of the {\em most interior} solution to~\eqref{primal}.  As detailed in~\cite{PenaS16}, it  follows that when $L\cap\R^n_{++}\ne \emptyset$, the  projection and rescaling algorithm finds a solution to~\eqref{primal} in at most $\log_2(1/\delta(L\cap \R^n_{++}))$ rounds of basic procedure and rescaling step.  Furthermore, each round of basic procedure and rescaling step requires a number of elementary operations that is bounded by a low-degree polynomial (quadratic or cubic) on $n$. 

The above projection and rescaling algorithm was originally proposed by Chubanov~\cite{Chub12,Chub15} and is in the same spirit as other rescaling methods in~\cite{BellFV09,Freund85,PenaS13}.  In addition to~\cite{PenaS16}, a number of articles~\cite{DaduVZ16,DaduVZ17,HobeR17,KitaT18,LiRT15,LourKMT16,Roos18} have proposed new algorithmic developments by extending the  projection and rescaling  templates introduced in~\cite{BellFV09,Chub12,Chub15,PenaS13}.
However, despite their interesting theoretical guarantees, there has been limited work on the computational effectiveness of the projection and rescaling algorithm as well as other methods based on rescaling.  As far as we know, only the articles by Li et al.~\cite{LiRT15} and by Roos~\cite{Roos18} report numerical results on implementations of some variants of Chubanov's projection and rescaling algorithm. 
 
This paper documents a MATLAB implementation of an enhanced version of the projection and rescaling algorithm from~\cite{PenaS16}.  Our work differs from~\cite{LiRT15,Roos18} in several ways.  Unlike the  algorithms in~\cite{LiRT15,Roos18}, our main algorithm solves both feasibility problems $L\cap \R^n_+$ and  $L^\perp\cap \R^n_+$ in a symmetric fashion. We also perform and report a significantly larger set of computational experiments in higher level of detail.  We compare, via numerous experiments, the performance of several possible schemes for the basic procedure.  We provide full descriptions of the algorithms that we implement.  The MATLAB code for our implementation is publicly available at the following website:

\begin{center} {\tt http://www.andrew.cmu.edu/user/jfp/epra.html} \end{center} 

All of the numerical experiments reported in this paper can be easily replicated and verified via the above code.  Furthermore,  since our MATLAB code is a verbatim implementation of the algorithms described in the sequel, it is straightforward to replicate our implementation in other numerical computing environments such as R, python, or Julia.

Algorithm~\ref{algo.MPRA}, the main algorithm in our implementation, incorporates the following enhancements to the original Projection and Rescaling Algorithm in~\cite{PenaS16}:

\begin{enumerate}
\item Let $L^\perp$ denote the orthogonal complement of $L$. Algorithm~\ref{algo.MPRA} finds {\em most interior} solutions to the problems
\begin{equation}\label{primal.again}
\text{ find} \;x \in L \cap \R^n_{+},
\end{equation}
and
\begin{equation}\label{dual}
\text{ find} \;\hat x \in L^\perp \cap \R^n_{+}.
\end{equation}
That is,  Algorithm~\ref{algo.MPRA} terminates with points in the relative interiors of $L \cap \R^n_{+}$ and $L^\perp \cap \R^n_{+}$.    In particular, if~\eqref{primal.again} is strictly feasible then Algorithm~\ref{algo.MPRA} finds a point in $L \cap \R^n_{++}.$  Likewise, if~\eqref{dual} is strictly feasible then Algorithm~\ref{algo.MPRA} finds a point in $L^\perp \cap \R^n_{++}.$

Unlike the  Projection and Rescaling Algorithm in~\cite{PenaS16} and the algorithms in~\cite{Chub12,Chub15,LiRT15}, Algorithm~\ref{algo.MPRA} requires no prior feasibility assumptions about~\eqref{primal.again} or~\eqref{dual}.



\item We enforce an upper bound on the size of the entries of the diagonal rescaling matrices maintained throughout Algorithm~\ref{algo.MPRA}.  The upper bound achieves two major goals.  First, it prevents numerical overflow.  Second, it yields a natural criteria to determine when the algorithm has found points in the relative interiors of $L \cap \R^n_{+}$ and $L^\perp \cap \R^n_{+}$.

\item In contrast to the rescaling step in the original Projection and Rescaling Algorithm that rescales $L$ only in one direction at each round, the rescaling step in Algorithm~\ref{algo.MPRA} performs a more aggressive rescaling along {\em all} directions that can improve the conditioning of the problem. This enhancement is fairly similar to a multiple direction rescaling step introduced by Louren{\c{c}}o et al~\cite{LourKMT16}.  It is also similar in spirit to an idea proposed by Roos~\cite{Roos18} to obtain sharper rescaling via a modified basic procedure.

\end{enumerate}

The first two enhancements above enable Algorithm~\ref{algo.MPRA} to apply without the kind of feasibility assumption required by the original Projection and Rescaling Algorithm, namely that $L \cap \R^n_{++} \ne \emptyset$ or $L^\perp \cap \R^n_{++} \ne \emptyset$ and without concerns about numerical overflow due to excessively large rescaling.  On the flip side, the correct termination of Algorithm~\ref{algo.MPRA} readily follows from the results in~\cite{PenaS16} only when one of conditions $L \cap \R^n_{++} \ne \emptyset$ or $L^\perp \cap \R^n_{++} \ne \emptyset$ holds and $U$ is sufficiently large.  Although our numerical experiments demonstrate that Algorithm~\ref{algo.MPRA} correctly terminates in the  majority of the cases, a formal proof of correct termination in the case when  both $L \cap \R^n_{++} =\emptyset$ and $ L^\perp \cap \R^n_{++} = \emptyset$ is not known yet.  The natural conjecture is that Algorithm~\ref{algo.MPRA} correctly terminates when $U$ is sufficiently large.  We will tackle this interesting theoretical question in some future work.

The basic procedure is the main building block of Algorithm~\ref{algo.MPRA}.  We make a separate comparison of the performance of the following four different schemes for the basic procedure proposed in~\cite{PenaS16}: perceptron, von Neumann, von Neumann with away-steps, and smooth perceptron schemes.  
These four schemes are described in Algorithm~\ref{alg:perceptron} through Algorithm~\ref{alg:smooth} below.   According to the theoretical results established in~\cite{PenaS16}, the first three of these schemes have similar convergence rates while Algorithm~\ref{alg:smooth} (the smooth perceptron scheme) has a faster convergence rate but each main iteration of this scheme is computationally more expensive. Section~\ref{sec:experiments.basic} describes various numerical experiments that compare the performance of the four schemes.  The experiments consistently demonstrate that indeed Algorithm~\ref{alg:smooth} has the best performance by a wide margin.  Therefore, we use Algorithm~\ref{alg:smooth} as the basic procedure within 
Algorithm~\ref{algo.MPRA}.  Section~\ref{sec:experiments.mpra} describes results on various numerical experiments that test the performance of  
Algorithm~\ref{algo.MPRA}.  Our results demonstrate the significant advantage of using aggressive rescaling {   and confirm a similar observation by Chubanov~\cite[Section 4.2]{Chub15}}.  They also provide promising evidence that Algorithm~\ref{algo.MPRA} can solve instances of moderate size.

The two main sections of the paper are organized as follows. In Section~\ref{sec:rescaling} we describe our enhanced version of the projection and rescaling algorithm.  This section also recalls four different schemes for the basic procedure proposed in~\cite{PenaS16}.  In Section~\ref{sec:experiments} we present several sets of numerical experiments.  To generate interesting problem instances, we devise a procedure to generate problem instances with arbitrary level of conditioning.  We perform several numerical experiments to compare the different schemes for the basic procedure.  We also perform a number of experiments to test the effectiveness of the enhanced projection and rescaling algorithm.

\section{Enhanced projection and rescaling algorithm}\label{sec:rescaling}

\subsection{Main algorithm}
Algorithm~\ref{algo.MPRA} below describes an enhanced version of the Projection and Rescaling Algorithm from~\cite{PenaS16}.  The algorithm relies on the following characterization of the relative interiors of $L\cap \R^n_+$ and $L^\perp\cap \R^n_+$ for a linear subspace $L \subseteq \R^n$.  The characterization in Proposition~\ref{prop.partition} is a consequence of the classical Goldman-Tucker partition theorem as detailed in~\cite{CheuCP03}.

\begin{proposition}\label{prop.partition} Let $L \subseteq \R^n$ be a linear subspace.  Then there exists a unique partion $B \cup N = \{1,\ldots,n\}$ such that
\[
\ri(L\cap \R^n_+) = \{x\in L \cap \R^n_+: x_i > 0 \text{ for all } i \in B\},
\]
and
\[
\ri(L^\perp\cap \R^n_+) = \{\hat x \in L^\perp \cap \R^n_+: \hat x_i > 0 \text{ for all } i \in N\}.
\]
In particular, $x\in \ri(L\cap \R^n_+)$ and $\hat x \in \ri(L^\perp\cap \R^n_+)$ if and only if $x\in L,\; \hat x\in L^\perp$, and
\begin{equation}\label{eq.relint}
x_B > 0, \, x_N = 0 \; \text{ and } \; \hat x_N > 0, \, \hat x_B = 0.
\end{equation}
\end{proposition}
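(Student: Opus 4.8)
The plan is to construct the Goldman--Tucker partition by hand and then identify $B$ and $N$ with the supports of the relative interiors. First I would set
\[
B := \{ i \in \{1,\ldots,n\} : x_i > 0 \text{ for some } x \in L\cap\R^n_+\}, \qquad N := \{1,\ldots,n\}\setminus B,
\]
which already pins down the partition uniquely, since any partition with the stated properties forces $B$ to equal this set. Because $L\cap\R^n_+$ is a convex cone, averaging finitely many witnesses yields a single $\bar x\in L\cap\R^n_+$ with $\bar x_i>0$ for every $i\in B$ (and automatically $\bar x_i = 0$ for $i\in N$, since each coordinate in $N$ vanishes on all of $L\cap\R^n_+$ by definition of $B$). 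A standard relative-interior argument for polyhedral cones then gives $\ri(L\cap\R^n_+) = \{x\in L\cap\R^n_+ : x_i>0 \text{ for all } i\in B\}$: if $x_B>0$ then for any $y\in L\cap\R^n_+$ the point $(1+\epsilon)x-\epsilon y$ stays in $L\cap\R^n_+$ for small $\epsilon>0$, so $x$ is in the relative interior; conversely, if $x\in\ri(L\cap\R^n_+)$ then some $(1+\epsilon)x - \epsilon\bar x$ lies in $L\cap\R^n_+$, which forces $x_j>0$ for each $j\in B$.

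The crux is to prove that the analogous support set of $L^\perp\cap\R^n_+$ is exactly $N$. One inclusion is immediate from orthogonality: for $x\in L\cap\R^n_+$ and $\hat x\in L^\perp\cap\R^n_+$ we have $0 = \ip{x}{\hat x} = \sum_i x_i\hat x_i$ with all terms nonnegative, hence $x_i\hat x_i = 0$ for every $i$; so no coordinate can be positive on both a member of $L\cap\R^n_+$ and a member of $L^\perp\cap\R^n_+$, i.e.\ the $L^\perp$-support is contained in $N$. For the reverse inclusion I would fix $i\in N$ and apply a theorem of the alternative: since no $x\in L\cap\R^n_+$ has $x_i>0$, the system $x\in L,\ x\ge 0,\ e_i\transp x\ge 1$ is infeasible, and Farkas' lemma --- writing $L = \ker A$ so that $\mathrm{range}(A\transp) = L^\perp$ --- produces $y$, $w\ge 0$, and $\lambda>0$ with $A\transp y = \lambda e_i + w$; then $\hat x := A\transp y$ lies in $L^\perp\cap\R^n_+$ and $\hat x_i \ge \lambda>0$. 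Thus $N$ is precisely the $L^\perp$-support, and applying the relative-interior argument of the first paragraph to $L^\perp$ (whose orthogonal complement is $L$) yields $\ri(L^\perp\cap\R^n_+) = \{\hat x\in L^\perp\cap\R^n_+ : \hat x_i>0 \text{ for all } i\in N\}$.

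Finally, the ``in particular'' clause follows by unwinding the two descriptions: every $x\in L\cap\R^n_+$ has $x_N=0$, so $x\in\ri(L\cap\R^n_+)$ is equivalent to $x\in L$, $x_N=0$, $x_B>0$; symmetrically $\hat x\in\ri(L^\perp\cap\R^n_+)$ is equivalent to $\hat x\in L^\perp$, $\hat x_B=0$, $\hat x_N>0$; conjoining these gives~\eqref{eq.relint}. I expect the only genuine obstacle to be pinning down the sign and direction conventions in the Farkas step so that the certificate actually lands in $L^\perp\cap\R^n_+$ with strictly positive $i$-th entry; the rest is convexity bookkeeping. Alternatively, this entire argument can be bypassed by quoting the classical Goldman--Tucker partition theorem for the self-dual complementary pair $(L\cap\R^n_+,\,L^\perp\cap\R^n_+)$, exactly as done in~\cite{CheuCP03}.
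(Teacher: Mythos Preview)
Your proposal is correct and in fact substantially more detailed than what the paper does: the paper gives no proof at all of this proposition, simply stating that ``the characterization in Proposition~\ref{prop.partition} is a consequence of the classical Goldman--Tucker partition theorem as detailed in~\cite{CheuCP03}.'' Your closing sentence already anticipates exactly this route, so you have covered both the paper's approach (cite Goldman--Tucker via~\cite{CheuCP03}) and supplied an independent elementary argument via Farkas' lemma that the paper omits. The Farkas step is set up correctly: infeasibility of $Ax=0,\ x\ge 0,\ e_i\transp x\ge 1$ yields $y$ and $w\ge 0$, $\lambda>0$ with $A\transp y = w+\lambda e_i$, so $\hat x:=A\transp y\in L^\perp\cap\R^n_+$ with $\hat x_i\ge\lambda>0$, and the relative-interior characterization you use for polyhedral cones is the standard one.
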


Observe that $B = \{1,\dots,n\}$ and $N = \emptyset$ when $L\cap \R^n_{++} \ne \emptyset$.  Similarly,  $B = \emptyset$ and $N = \{1,\dots,n\}$  when   $L^\perp\cap \R^n_{++} \ne \emptyset$.  In both of these cases we shall say that the partition $(B,N)$ is {\em trivial}.  We shall say that the partition $(B,N)$ is {\em non-trivial} otherwise, that is, when $B\ne \emptyset$ and $N \ne \emptyset$.

\medskip

Each main iteration of Algorithm~\ref{algo.MPRA} applies the following steps.  First, apply the basic procedure to $D(L) \cap \R^n_+$ and $\hat D(L^\perp) \cap \R^n_+$ for some diagonal rescaling matrices $D$ and $\hat D$.  Next, identify a potential partition $(B,N)$ and terminate if the basic procedures yield $x\in L$ and $\hat x \in L^\perp$ satisfying~\eqref{eq.relint}.  Otherwise, update the rescaling matrices $D$ and $\hat D$ and proceed to the next main iteration: Apply the basic procedure to $D(L) \cap \R^n_+$ and $\hat D(L^\perp) \cap \R^n_+$, etc. 

To prevent numerical overflow, Algorithm~\ref{algo.MPRA} caps the entries of the rescaling matrices $D$ and $\hat D$ by some pre-specified upper bound $U$.  This upper bound naturally determines a numerical threshold to verify if the algorithm has found solutions in the relative interiors of $L \cap \R^n_+$ and $L^\perp \cap \R^n_+$.  More precisely, Algorithm~\ref{algo.MPRA} will terminate with points $x \in L$ and $\hat x \in L^\perp$ satisfying the following approximation of~\eqref{eq.relint}:
\[
x_B > 0, \; \|x_N\|_\infty \le \frac{1}{U} \|x\|_\infty \; \text{ and } \;
\hat x_N > 0, \; \|\hat x_B\|_\infty \le \frac{1}{U} \|\hat x\|_\infty. 
\]

{\centering\begin{minipage}{\linewidth}
\begin{algorithm}[H]
  \caption{Enhanced Projection and Rescaling Algorithm (EPRA)
    \label{algo.MPRA}}
  \begin{algorithmic}[1]
    \State  ({\bf Initialization}) 
    \Statex Let $D := I$ and $\hat D := I$. 
       \Statex Let $P := P_L$ and $\hat P := P_{L^\perp}$.  
\Statex Let $U >0$ be a pre-specified upper bound on the rescaling matrices $D$ and $\hat D$.
	\State ({\bf Basic Procedure})
	\Statex 
	Find  $z \gneqq 0$ such that either $Pz > 0$ or $\|(Pz)^+\|_1 \le \frac{1}{2} \|z\|_\infty$.
	\Statex
	Find  $\hat z \gneqq 0$ such that either $\hat P\hat z > 0$ or $\|(\hat P\hat z)^+\|_1 \le \frac{1}{2} \|\hat z\|_\infty$.

\State ({\bf Partition identification})
\Statex Let $x:=D^{-1}Pz$ and $\hat x := \hat D^{-1}\hat P\hat z$.
\Statex Let $B:=\{i: \vert\hat x_i\vert < \frac{1}{U}\|\hat x\|_\infty\}$ and $N:=\{i:\vert x_i \vert< \frac{1}{U}\|x\|_\infty\}$.
	\IIf{$(B,N)$ partitions $\{1,\dots,n\}$}
	\Statex
HALT and \Return $x \in \ri(L \cap \R^n_{+}), \; \hat x \in \ri(L^\perp \cap \R^n_{+})$ \EndIIf  
  	\State ({\bf Rescaling step}) 
	\Statex Let $e:= \left(z/\|( P z)^+\|_1 -1\right)^+$, $D:=\min\left((I+\diag(e))D,U\right)$ and $P:=P_{D(L)}$. 
	\Statex Let $\hat e:= \left(\hat z/\|(\hat P\hat z)^+\|_1 -1\right)^+$, $\hat D:=\min\left((I+\diag(\hat e))\hat D,U\right)$ and $\hat P:=P_{\hat D(L^\perp)}$. 
	\Statex Go back to step 2.
	\end{algorithmic}
\end{algorithm}
\end{minipage}}

\subsection{Basic procedure}\label{basicprocedures}

Let $P:\R^n \rightarrow \R^n$ be the orthogonal projection onto a linear subspace of $\R^n$ and $\epsilon \in (0,1)$.  The goal of the basic procedure is to find a non-zero $z\in \R^n_+$ such that either $Pz > 0$ or $\|(Pz)^+\|_1 \le \epsilon \|z\|_\infty$.  We choose $\epsilon = 1/2$ when the basic procedure is used within Algorithm~\ref{algo.MPRA}.
We next recall the four schemes for the basic procedure proposed in~\cite{PenaS16}.  Algorithm~\ref{alg:perceptron} describes the simplest of these schemes, namely the perceptron scheme.  In the algorithms below $\Delta_{n-1}$ denote the standard simplex in $\R^n$ ,that is, \[
\Delta_{n-1} = \{x\in \R^n_+: \|x\|_1 = 1\}.
\]

{\centering\begin{minipage}{\linewidth}
\begin{algorithm}[H]
  \caption{Perceptron Scheme
    \label{alg:perceptron}}
  \begin{algorithmic}[1]
  \State Pick $z_0\in \Delta_{n-1}$, and $t:=0.$
   \While {$Pz_t \ngtr 0$ and $\|(Pz_t)^+\|_1 > \epsilon \|z_t\|_\infty$}
\Statex \quad Pick $u \in \Delta_{n-1}$ such that $\ip{u}{Pz_t} \le 0$.
\Statex \quad  Let $z_{t+1}:=\left(1-\frac{1}{t+1}\right) z_t  + \frac{1}{t+1} u.$
\Statex \quad $t :=t+1.$
\EndWhile
\end{algorithmic}
\end{algorithm}
\end{minipage}}

\bigskip

Algorithm~\ref{alg:vonNeumann} describes the second basic procedure scheme, namely the von Neumann scheme.  This scheme is a greedy variant of the perceptron scheme. This algorithm relies on the following mapping
\[
u(v) := \argmin_{u\in\Delta_{n-1}} \ip{u}{v}.
\]
At each iteration, Algorithm~\ref{alg:vonNeumann} chooses $z_{t+1}$ as the convex combination of $z_t$ and $u(Pz_t)$ that minimizes $\|Pz_{t+1}\|_2$.

{\centering\begin{minipage}{\linewidth}
\begin{algorithm}[H]
  \caption{Von Neumann Scheme
    \label{alg:vonNeumann}}
  \begin{algorithmic}[1]
  \State Pick $z_0\in \Delta_{n-1}$, and $t:=0.$
   \While {$Pz_t \ngtr 0$ and $\|(Pz_t)^+\|_1 > \epsilon\|z_t\|_\infty$}
\Statex \quad Let $u=u(Pz_t)$ and $z_{t+1}:= z_t + \theta(u-z_t)$
 where
\[
\theta_t = \argmin_{\theta\in[0,1]} \|P(z_t + \theta (u-z_t))\|_2^2 = 
\dfrac{\|Pz_t\|_2^2 -\ip{u}{Pz_t}}{\|Pz_t\|_2^2 + \|Pu\|_2^2 - 2\ip{u}{Pz_t}}.
\] 
\Statex \quad $t :=t+1.$
\EndWhile

\end{algorithmic}
\end{algorithm}
\end{minipage}}

\bigskip

Algorithm~\ref{alg:vonNeumann.away} describes the third basic procedure scheme, namely the von Neumann with away steps scheme, which in turn is a variant of the von Neumann scheme.  Algorithm~\ref{alg:vonNeumann.away} relies on the following construction. Define the {\em support} of a current iterate $z$ as $S(z):=\{i\in\{1,\ldots,n\}: z_i > 0\}.$  At each main iteration Algorithm~\ref{alg:vonNeumann.away} chooses between two different kinds of steps: {\em regular} steps as in Algorithm~\ref{alg:vonNeumann} and {\em away steps} that  decrease the weight on a component of $z$  belonging to $S(z)$.  The away steps are computed via the mapping 
\[v(z):=\argmax_{v\in\Delta_{n-1}\atop S(v) \subseteq S(z)} \ip{v}{Pz}.
\]

{\centering\begin{minipage}{\linewidth}
\begin{algorithm}[H]
  \caption{Von Neumann with Away Steps Scheme
    \label{alg:vonNeumann.away}}
  \begin{algorithmic}[1]
  \State Pick $z_0\in \Delta_{n-1}$, and $t:=0.$
   \While {$Pz_t \ngtr 0$ and $\|(Pz_t)^+\|_1 > \epsilon \|z_t\|_\infty$}
\Statex  \quad Let $u = u(Pz_t)$ and  $v = v(z_t)$.
\Statex  \quad  {\bf if} $\|Pz_t\|^2 - \ip{u}{Pz_t} > \ip{v}{Pz_t} -  \|Pz_t\|^2$ {\bf then} (regular step)

\quad $a:= u-z_t; \; \theta_{\max} = 1$,

\Statex \quad {\bf else} (away step)

\quad $a:= z_t-v; \; \theta_{\max} = \frac{\ip{v}{z}}{1-\ip{v}{z}}$.

\Statex \quad {\bf endif}
\Statex \quad Let $z_{t+1}:= z_t + \theta a$ where
\[
\theta = \argmin_{\theta\in[0,\theta_{\max}]} \|P(z_t + \theta a)\|^2 
=
\min\left\{ \theta_{\max} , -\dfrac{\ip{z_t}{Pa}}{\|Pa\|^2}\right\} 
\]
\Statex  \quad $t :=t+1$.
\EndWhile

\end{algorithmic}
\end{algorithm}
\end{minipage}}
\bigskip

Algorithm~\ref{alg:smooth} describes the fourth basic procedure scheme, namely the smooth perceptron scheme, which in turn is a variant of the the perceptron scheme that relies on the following smooth version of the mapping $u(\cdot)$.  Let $\bar u \in \Delta_{n-1}$ be fixed.  For $\mu > 0$ let
\[
u_\mu(v) := \argmin_{u\in\Delta_{n-1}}\left\{\ip{u}{v} + \frac{\mu}{2}\|u-\bar u\|^2\right\}.
\]

{\centering\begin{minipage}{\linewidth}
\begin{algorithm}[H]
  \caption{Smooth Perceptron Scheme
    \label{alg:smooth}}
  \begin{algorithmic}[1]
  \State Let $u_0 := \bar u$; $\mu_0 = 2$; $z_0:=u_{\mu_0}(Pu_0);$ and $t:=0$.
 \While {$Pz_t \ngtr 0$ and $\|(Pz_t)^+\|_1 > \epsilon\|z_t\|_\infty$}
\Statex  \quad $\theta_t:=\frac{2}{t+3}$ 
\Statex   \quad $ u_{t+1} :=(1-\theta_t)u_t + \theta_t z_t  + \theta_t^2 u_{\mu_t}(Pu_t)$
\Statex  \quad $\mu_{t+1} := (1-\theta_t)\mu_t$
\Statex \quad  $z_{t+1} := (1-\theta_t)z_t + \theta_t u_{\mu_{t+1}}(Pu_{t+1})$
\Statex  \quad $t :=t+1$.
\EndWhile

\end{algorithmic}
\end{algorithm}
\end{minipage}}

\section{Numerical experiments}\label{sec:experiments}

This section describes various sets of numerical experiments that test the Enhanced Projection and Rescaling Algorithm described as Algorithm~\ref{algo.MPRA} above.   We also performed numerical experiments to compare the four schemes for the basic procedure, namely Algorithm~\ref{alg:perceptron} through Algorithm~\ref{alg:smooth} on suitably generated instances.  

\subsection{Schemes to construct challenging instances}
\label{sec.construct}
We should note that except for the case when the dimension of the subspace $L$ is about half the dimension of the ambient space $\R^n$, a naive approach to generate random instances yields results of limited interest.  
More precisely, suppose $L \subseteq \R^n$ is a random subspace generated via $L = \ker(A)$ where the entries  of $A\in\R^{m\times n}$ are independently drawn from a standard normal distribution.  From a classical result on coverage processes by Wendel~\cite[Equation (1)]{Wend62} it follows that 
\begin{equation}\label{eq.prob}
\mathbb{P}(L^\perp\cap\R^n_{++} \ne \emptyset) = 2^{1-n}\sum_{k=0}^{m-1} {{n-1}\choose k}\;\;\text{ and }\;\;\mathbb{P}(L\cap\R^n_{++} \ne \emptyset) = 2^{1-n}\sum_{k=m}^{n-1} {{n-1}\choose k}.
\end{equation}
In particular,~\eqref{eq.prob} implies that if $n$ is even and $\dim(L) = n-m = n/2$ then $L\cap \R^n_{++} \ne \emptyset$ with probability 0.5.  Furthermore,~\eqref{eq.prob} implies that if $\dim(L) = n-m \gg n/2$ then $L\cap \R^n_{++} \ne \emptyset$ with high probability.  Similarly,~\eqref{eq.prob} implies that if $\dim(L) = n-m \ll n/2$ then    $L^\perp\cap \R^n_{++} \ne \emptyset$ with high probability.  The identity~\eqref{eq.prob} also suggests that when $L\subseteq \R^n$ is a random subspace and $\dim(L)$ is far enough from $n/2$ then with high probability $\max\{\delta(L\cap \R^n_{++}),\delta(L^\perp\cap \R^n_{++})\}$ is bounded away from zero as there is extra room for either $L$ or $L^\perp$ to cut deep inside $\R^n_{++}$.  The latter fact can be rigorously stated and justified, albeit in somewhat technical terms, by using the machinery on coverage processes and probabilistic analysis of condition numbers developed by B\"urgisser et al~\cite{BurgCL10}.  Our numerical experiments confirm that indeed most random instances $L$ with either $\dim(L)\gg n/2$ or $\dim(L)\ll n/2$ are easily solvable without rescaling (see Table~\ref{table.interior.naive} in Section~\ref{sec:experiments}).  Therefore such random instances are 
not particularly interesting.

\medskip

We next describe schemes to generate collections of more interesting and  challenging instances.    First, we describe how to generate random subspaces $L\subseteq \R^n$ such that $L\cap \R^n_{++}\ne \emptyset$ with a {\em controlled} condition measure $\delta(L\cap \R^n_{++})$.
We subsequently describe how to generate random subspaces $L\subseteq \R^n$ such that both $L\cap \R^n_{+}$ and $L^\perp \cap \R^n_{+}$ have non-trivial relative interiors.

\begin{proposition}\label{prop.control.cn}
Let $\bar x\in\R^n_{++}$ and $\bar u\in \R^n_+$ be such that $\|\bar x\|_\infty = 1$, $\|\bar u\|_1 = n$ and $\bar u_j = 0$ whenever $\bar x_j < 1$ for $j=1,\dots,n$. 
Let $A = \matr{a_1 & \cdots & a_m}\transp \in \R^{m\times n}$ be such that $a_1 =  \bar u-\bar X^{-1}\1$ and $\ip{a_j}{\bar x} = 0$ for $j=2,\dots,m$ where $\bar X\in \R^{n\times n}$ is a diagonal matrix with elements of $\bar x$ spread across the diagonal and $\1\in\R^n$ is the vector of ones. Then for $L = \ker(A) := \{x\in \R^n : Ax = 0\}$ we have 
\[
\bar x = \argmax\left\{\prod_{j=1}^n x_j: x\in L\cap\R^n_{++}, \|x\|_\infty = 1\right\}.
\]
In particular, $L\cap\R^n_{++} \ne \emptyset$ and $\delta(L\cap\R^n_{++}) = \prod_{j=1}^n \bar x_j.$
\end{proposition}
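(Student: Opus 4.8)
The plan is to verify the first-order optimality (KKT) conditions for the concave maximization problem defining $\delta(L\cap\R^n_{++})$ and show that $\bar x$ satisfies them, hence is the unique maximizer. It is more convenient to work with the logarithm: since $\log$ is strictly increasing, $\bar x$ maximizes $\prod_{j=1}^n x_j$ over $\{x\in L\cap\R^n_{++}: \|x\|_\infty=1\}$ if and only if it maximizes $\sum_{j=1}^n \log x_j$ over the same set. The objective $\sum_j \log x_j$ is strictly concave on $\R^n_{++}$, and the feasible region is the intersection of the subspace $L$ with the compact convex-ish set $\{x: \|x\|_\infty\le 1\}\cap\R^n_{++}$ (one must be slightly careful that the sup is attained and lies in $\R^n_{++}$, but $\bar x\in L\cap\R^n_{++}$ is a feasible point with finite objective, and as any coordinate $x_j\to 0$ the objective $\to-\infty$, so the sup is attained at an interior-of-$\R^n_{++}$ point). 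Strict concavity then gives uniqueness of the maximizer once we exhibit one.

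First I would set up the Lagrangian. The constraint $x\in L=\ker(A)$ is $Ax=0$; the constraint $\|x\|_\infty=1$ I would replace by the single active inequality $x_i\le 1$ at the coordinate(s) where $\bar x_i=1$ — note the hypothesis $\|\bar x\|_\infty=1$ guarantees such an $i$ exists. So I would write the necessary-and-sufficient optimality condition: $\bar x$ is optimal iff there exist a multiplier vector $y\in\R^m$ for $Ax=0$ and multipliers $\lambda_j\ge 0$ (for the constraints $x_j\le 1$, with $\lambda_j=0$ whenever $\bar x_j<1$ by complementary slackness) such that
\[
\nabla\Big(\sum_{j=1}^n \log x_j\Big)\Big|_{\bar x} \;=\; A\transp y + \sum_{j=1}^n \lambda_j e_j,
\]
i.e. $\bar X^{-1}\1 = A\transp y + \lambda$ where $\lambda\in\R^n_+$ has support contained in $\{j:\bar x_j=1\}$. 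Equivalently, $\bar X^{-1}\1 + (-\lambda) \in \mathrm{Row}(A) = L^\perp$, so it suffices to find $\lambda\in\R^n_+$, supported on $\{j:\bar x_j=1\}$, with $\bar X^{-1}\1 - \lambda \in L^\perp$.

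The key step is the explicit choice $\lambda := \bar u$. The hypotheses on $\bar u$ are tailored exactly for this: $\bar u\in\R^n_+$, and $\bar u_j=0$ whenever $\bar x_j<1$, so $\bar u$ is supported on $\{j:\bar x_j=1\}$ as required for complementary slackness. It remains to check $\bar X^{-1}\1 - \bar u \in L^\perp = \mathrm{Row}(A) = \mathrm{span}\{a_1,\dots,a_m\}$. By construction $a_1 = \bar u - \bar X^{-1}\1$, so $\bar X^{-1}\1 - \bar u = -a_1 \in \mathrm{Row}(A)$. Thus the optimality conditions hold with $y = -e_1$ (in $\R^m$) and $\lambda=\bar u\ge 0$. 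I should also double-check feasibility of $\bar x$ itself: $\bar x\in\R^n_{++}$ and $\|\bar x\|_\infty=1$ are assumed, and $\bar x\in\ker(A)$ holds because $\ip{a_1}{\bar x} = \ip{\bar u}{\bar x} - \ip{\bar X^{-1}\1}{\bar x} = \ip{\bar u}{\bar x} - n$; since $\bar u$ is supported where $\bar x_j=1$ we get $\ip{\bar u}{\bar x} = \sum_{j:\bar x_j=1}\bar u_j = \|\bar u\|_1 = n$, so $\ip{a_1}{\bar x}=0$, and $\ip{a_j}{\bar x}=0$ for $j\ge 2$ by hypothesis. Hence $\bar x\in L\cap\R^n_{++}$ is feasible and satisfies the sufficient optimality conditions, so it is the unique maximizer; the final sentence then follows immediately from the definition~\eqref{eq:ConditionMeasure}.

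The only genuine subtlety — the part I'd be most careful about — is justifying that the KKT conditions for this particular problem are \emph{sufficient} for global optimality, i.e. that no constraint qualification or concavity issue spoils the argument. This is handled cleanly by the log-transformation: maximizing a strictly concave function over the intersection of an affine subspace with the polyhedron $\{x:\ 0\le x_j,\ x_j\le 1\}$ is a convex program with only linear constraints, for which the KKT conditions are automatically sufficient (Slater/affine constraint qualification holds trivially since all constraints are affine), and strict concavity upgrades "a maximizer" to "the unique maximizer." Everything else is the short verification above.
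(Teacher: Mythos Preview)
Your proof is correct and uses essentially the same idea as the paper---verifying first-order optimality for the concave relaxation $\max\{\sum_j \log x_j : x\in L\cap\R^n_{++},\ \|x\|_\infty\le 1\}$---but packaged differently. You phrase it as a KKT argument: exhibit multipliers $y=-e_1$ and $\lambda=\bar u$, check stationarity $\bar X^{-1}\1 = A\transp y + \lambda$, complementary slackness, and invoke sufficiency of KKT for concave programs with affine constraints. The paper instead verifies the equivalent supergradient inequality directly: for any feasible $x$, it shows $\ip{\bar X^{-1}\1}{x-\bar x}\le 0$ by using $\ip{a_1}{x}=0$ to rewrite $\ip{\bar X^{-1}\1}{x}=\ip{\bar u}{x}$ and then applying H\"older's inequality $\ip{\bar u}{x}\le \|\bar u\|_1\|x\|_\infty\le n$. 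Your route is more structural (it names the dual certificate explicitly and makes the role of the support condition on $\bar u$ transparent as complementary slackness), while the paper's route is slightly more self-contained (no appeal to KKT theory, just concavity plus one inequality). Both hinge on the same observation that $a_1$ was chosen so that $\bar X^{-1}\1 - \bar u \in L^\perp$.
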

\begin{proof} It suffices to show that
\begin{align}\label{deltaInfinityrelax}
\bar x &= \argmax_x\left\{\ln\left(\prod_{i=1}^n x_i\right): x\in L \cap \R^n_{++}, \|x\|_\infty = 1\right\} \notag\\
& =\argmax_x\left\{\ln\left(\prod_{i=1}^n x_i\right): x\in L \cap \R^n_{++}, \|x\|_\infty \leq 1\right\}. 
\end{align}
The conditions on the rows of $A$ readily ensure that $\bar x \in  L \cap \R^n_{++}$.  Thus $\bar x$ is a feasible solution to~\eqref{deltaInfinityrelax}.  Since $\zeta = \bar X^{-1} \1$ is the gradient of the objective function in~\eqref{deltaInfinityrelax} at $\bar x$, to show that $\bar x$ is optimal it suffices to show that $\ip{\zeta}{x - \bar x} \leq 0$ for any feasible solution to~\eqref{deltaInfinityrelax}.
Take $x\in L\cap\R^n_{++}$ with $\|x\|_\infty \leq 1$. Since $x\in L$, we have $\ip{\bar X^{-1}\1 - \bar u}{x} = \ip{a_1}{x} = 0$. 
Therefore $\ip{\zeta}{x-\bar x} = \ip{\bar X^{-1} \1}{x} - n \le \ip{\bar u}{ x} - \|\bar u\|_1\|x\|_{\infty} \leq 0$.  The last two steps follow  from $\|\bar u \|_1 = n$ and H\"older's inequality respectively.
\end{proof} \qed

Proposition~\ref{prop.control.cn} readily suggests a scheme to generate subspaces $L \subseteq \R^n$ such that the condition measure $\delta(L\cap \R^n_{++})$ is positive but as small as we wish: pick $\bar x\in \R^n_{++}$ with $\|\bar x\|_\infty =1$ and generate $\bar u\in \R^n_+, A \in \R^{m\times n},$ and $L = \ker(A)$ as in Proposition~\ref{prop.control.cn}.  We next explain how Proposition~\ref{prop.control.cn} can be further leveraged to generate $L \subseteq \R^n$ so that both $L\cap \R^n_+$ and $L^\perp\cap \R^n_+$ have non-trivial relative interiors.  Suppose $(B,N)$ is a partition of $\{1,\dots,n\}$ and 
\begin{equation}\label{eq.block.matrix}
 A = \begin{bmatrix}A_{BB} & A_{NB}\\ 0& A_{NN}\end{bmatrix}
\end{equation}
is such that $L_B = \ker(A_{BB}) \subseteq \R^B$ and $L_N = \text{Im}(A_{NN}\transp) \subseteq \R^N$ satisfy $L_B\cap \R^B_{++}\ne\emptyset$ and $
L_N\cap \R^N_{++}\ne\emptyset$. If $A_{NN}$ is full row-rank then it readily follows that  the subspaces $L = \ker(A)$ and $L^\perp = \text{Im}(A\transp)$ satisfy
\[
\ri(L\cap \R^n_+) = \{x\in L\cap \R^n_+: x_i >0 \text{ for all } i \in B\} 
\]
and
\[
\ri(L^\perp\cap \R^n_+) = \{\hat x\in L^\perp\cap \R^n_+: \hat x_i >0 \text{ for all } i \in N\}. 
\]

Hence we can generate subspaces $L \subseteq \R^n$ such that both $L\cap \R^n_+$ and $L^\perp\cap \R^n_+$ have non-trivial relative interiors by proceeding as follows.  First, choose a partition $(B,N)$ of $\{1,\dots,n\}.$ Next, use the construction suggested by Proposition~\ref{prop.control.cn} to generate full row-rank matrices $A_{BB},A_{NN}$ such that $\ker(A_{BB})\cap \R^B_{++}\ne\emptyset$ and $
\text{Im}(A_{NN}\transp)\cap \R^N_{++}\ne\emptyset$.  Finally let $L = \ker(A)$ where $A$ is of the form~\eqref{eq.block.matrix} for some $A_{NB}$ of appropriate size.

\subsection{Comparison of basic procedure schemes}\label{sec:experiments.basic}

The computational experiments summarized in this section compare the performance of the four schemes for the basic procedure, namely Algorithm~\ref{alg:perceptron} through Algorithm~\ref{alg:smooth}. We implemented these algorithms in MATLAB and ran them on collections of instances defined by $L
 = \text{ker}(A)$, for $A \in \R^{m\times n}$. We used the QR-factorization to obtain the orthogonal projection mappings $P = P_L$ and $\hat P = P_{L^\perp}$. 

We performed two main sets of experiments.  The first set of experiments contains instances $L=\ker(A)$ where the entries of $A \in \R^{m\times n}$ are independently drawn from a standard normal distribution and $m = n/2$ for $n=200, 500, 1000, 2000.$   When $m$ significantly differs from $n/2$,  random instances generated in this way are uninteresting as they can easily be solved by any of the four schemes.
The second set of experiments contains more challenging instances $L=\ker(A)$ where $A\in \R^{m\times n}$ is generated via the procedure suggested by Proposition~\ref{prop.control.cn} for $n=1000$, $m = 100, 200, 800, 900$.   More precisely, we generated $\bar x \in \R^n_{++}$ as follows.  First, we set a random chunk of its entries uniformly at random between 0 and 0.001. Second, we set remaining entries uniformly at random distributed between 0 and 1.  Third, we scaled the entries of $\bar x$ to obtain $\|\bar x\|_\infty = 1$.  Once we generated $\bar x$ in this fashion, we generated $A \in \R^{m\times n}$ as in Proposition~\ref{prop.control.cn}. 

Table~\ref{summaryTableeps1} through Table~\ref{summaryTableeps4.control} 
summarize the results on various sets of experiments.  Each row corresponds to a set of 1000 instances.  To keep the number of iterations and CPU time manageable, we enforced an upper bound of 10000 iterations for all four schemes.  The first two columns in each table indicate the  size of $A \in \R^{m\times n}$.  The other columns display three numbers for each of the four schemes: the average number of iterations, the average CPU time, and the success rate on the batch of 1000 instances of size $m$ by $n$.  The success rate is the proportion of instances on which the scheme terminates normally before reaching the upper bound of 10000 iterations.

Table~\ref{summaryTableeps1} and Table~\ref{summaryTableeps4} display the results for the first set of experiments when $m=n/2$ and $A \in \R^{m\times n}$ is randomly generated without any control on the conditioning of $L\cap\R^n_{++}$.  Table~\ref{summaryTableeps1.control} and Table~\ref{summaryTableeps4.control}  display similar summaries for the second set of experiments where we generate $A\in \R^{m\times n}$ so that $L\cap\R^n_{++}$ has a controlled condition measure via the procedure suggested by Proposition~\ref{prop.control.cn}.  The tables summarize results for two values of  $\epsilon$: $\epsilon = 10^{-1}$ (large), and $\epsilon = 10^{-4}$ (small).

\begin{table}[H]
\centering
\caption{Results for naive random instances, large $\epsilon$ ($\epsilon = 10^{-1}$), {   and 10000 iteration limit}}
{\small
\begin{tabular}{|c|c|c|c|c|c|c|c|c|c|c|}
\hline
 $m$ & $n$ & perceptron & VN & VNA & smooth  \\
 \hline
100&	200&	(6956.28, 0.27, 0.74) &	(5070.41, 0.26, 0.69) &	(3021.73, 0.23, 0.95) &	{\color{blue}(27.04, 0.03, 1)}\\
250&	500&	(9963.91, 0.85, 0.02) & (9207.1, 0.26, 0.2) & (8737.9, 0.23, 0.38) & {\color{blue} (43.88, 0.13, 1)}\\
500&	1000&	(10000, 8.67, 0)&	(9981.29, 8.84, 0.01)&	(9992.46, 14.3, 0.01)&	{\color{blue}(58.50, 0.42, 1)}\\
1000	&2000&	(10000, 34.72, 0)&	(10000, 35.54, 0)&	(10000, 67.24, 0)&{\color{blue}(80.21, 1.42, 1)}\\
\hline
\end{tabular}\label{summaryTableeps1}
}
\end{table}

\begin{table}[H]
\centering
\caption{Results for naive random instances, small $\epsilon$ ($\epsilon = 10^{-4}$),
{   and 10000 iteration limit}}
{\small
\begin{tabular}{|c|c|c|c|c|c|c|c|c|c|c|}
\hline
 $m$ & $n$ & perceptron & VN & VNA & smooth  \\
 \hline
100&	200&	(8236.2, 0.33, 0.35) &	(5395.1, 0.28, 0.67) &	(5861, 0.45, 0.66) &	{\color{blue}(123.6, 0.14, 1)}\\
250&	500&	(9981.9, 0.94, 0.01) & (9258.1, 0.99, 0.2) & (9518.5, 1.64, 0.16) & {\color{blue} (231.8, 0.64, 1)}\\
500&	1000&	(10000, 8.28, 0)&	(9973.7, 8.39, 0.01)&	(9988.5, 13.57, 0.01)&	{\color{blue}(337.64, 2.15, 1)}\\
1000	&2000&	(10000, 35.61, 0)&	(10000, 36.34, 0)&	(10000, 68.71, 0)&{\color{blue}(465.94, 7.87, 1)}\\
\hline
\end{tabular}\label{summaryTableeps4}
}
\end{table}

\begin{table}[H]
\centering
\caption{{   Results for controlled condition instances, large $\epsilon$ ($\epsilon = 10^{-1}$),
 and 10000 iteration limit}}
\begin{tabular}{|c|c|c|c|c|c|c|c|c|c|}

\hline
 $m$ & $n$ & perceptron & VN & VNA & smooth  \\
 \hline
100&	1000&	(9134.38, 0.88, 0.32) &	(8519.12, 8.48, 0.25) &	(3329.84, 6.27, 1) &	{\color{blue}(130.77, 0.30, 1)}\\
200&	1000&	(9649.15, 8.02, 0.21) & (8645.91, 7.35, 0.26) & (5005.64, 8.12, 0.98) & {\color{blue} (140.21, 0.27, 1)}\\
800&	1000&	(3383.55, 2.86, 0.87)&	(9798.34, 8.33, 0.03)&	(6566.22, 10.61, 0.7)&	{\color{blue}(220.53, 0.42, 1)}\\
900	&1000&	(2156.34, 1.92, 0.99)&	(9842.71, 8.71, 0.03)&	(1429.66, 2.43, 1)&{\color{blue}(198.58, 0.39, 1)}\\
\hline
\end{tabular}\label{summaryTableeps1.control}
\end{table}

\begin{table}[H]
\centering
\caption{{   Results for controlled condition instances, small $\epsilon$ ($\epsilon = 10^{-4}$),  and 10000 iteration limit}}
{\small
\begin{tabular}{|c|c|c|c|c|c|}
\hline
$m$ & $n$ & perceptron & VN & VNA & smooth  \\
 \hline
100&	1000&	(9961.9, 8.26, 0) &	(9926.9, 16.06, 0.01) &	(9934.6, 16.01, 0.01) &	{\color{blue}(9463.5, 17.65, 0.16)}\\
200&	1000&	(9952.6, 8.74, 0.01) & (9942.1, 16.95, 0.01) & (9950.1, 16.95, 0.01) & {\color{blue} (9579.9, 19.16, 0.13)}\\
800&	1000&	(9964.52, 8.94, 0)&	(9961.16, 17.15, 0)& (9967.6, 17.15, 0)&	{\color{blue}(8557.5, 17.05, 0.75)}\\
900	&1000&	(9939.5, 8.80, 0.01)&	(9915.7, 16.94, 0.01)&	(9939.1, 16.94, 0.01)&{\color{blue}(7537.2, 14.85, 0.97)}\\
\hline
\end{tabular}\label{summaryTableeps4.control}
}
\end{table}

\bigskip

When $\epsilon$ is large (Table~\ref{summaryTableeps1} and Table~\ref{summaryTableeps1.control}), the algorithms often stop when the condition $\|(Pz)^+\|_1\le \epsilon {  \|z\|_\infty}$ is satisfied. Not surprisingly, when $\epsilon$ is small (Table~\ref{summaryTableeps4} and Table~\ref{summaryTableeps4.control}), the basic procedures more often stop when $Pz >0$  and require a larger number of iterations and longer CPU time.  Also as expected, when the instances become larger, they become more challenging and more iterations are needed to find a feasible solution.

Our numerical experiments for large $\epsilon$ demonstrate that the smooth perceptron scheme is faster both in number of iterations and in terms of CPU time than any of the other three schemes.  The experiments also suggest that {   when enforcing the 10000 iteration limit} the perceptron, von Neumann, and von Neumann with away steps  are comparable in terms of number of iterations and CPU time.  Given the evidence in favor of the smooth perceptron scheme, we use this method within the Enhanced Projection and Rescaling Algorithm.

We note that the numerical experiments for small $\epsilon$ in Table~\ref{summaryTableeps4.control} confirm that the scheme for generating challenging instances indeed yields instances that are difficult to solve for all schemes and thus provide an interesting testbed for the Enhanced Projection and Rescaling Algorithm.

{  
The low success rates in some of the entries in Table~\ref{summaryTableeps1} through Table~\ref{summaryTableeps4.control} reveal that for many instances the upper limit of 10000 iterations is reached by the perceptron, von Neumann, and von Neumann with away steps schemes.  Thus for additional robustness check, we also performed some extra sets of experiments 
without any limit on the number of iterations.  The results are summarized in Table~\ref{new.table1} and Table~\ref{new.table2}.    We ran fewer instances and used $\epsilon = 10^{-1}$ to keep the experiments manageable.  (Some schemes run for over several million iterations in some instances.)  The last four columns of  Table~\ref{new.table1} and Table~\ref{new.table2} report only the average number of iterations and average CPU times since all instances are run until successful termination without iteration limit.  Each row corresponds to a set of 100 instances except for the last row for $m=1000, \; n= 2000$.  In this case we only ran 20 instances due to time limitations.  Without iteration limit, some of these instances take multiple hours of CPU time.

The results in these two tables further confirm that the smooth perceptron scheme is faster both in number of iterations and in terms of CPU time than any of the other three schemes.  Furthermore, the additional experiments suggest that without iteration limit the von Neumann scheme usually requires the highest number of iterations.}

\begin{table}[H]
\centering
\caption{{  Results for naive random instances, large $\epsilon$ ($\epsilon = 10^{-1}$), and no iteration limit}}
{\small
\begin{tabular}{|c|c|c|c|c|c|c|c|c|c|c|}
\hline
 $m$ & $n$ & perceptron & VN & VNA & smooth  \\
 \hline
100&	200&	(8780.80, 0.31) &	(24453.42, 1.11) &	(3054.96, 0.22) &	{\color{blue}(66.7, 0.008)}\\            
250&	500&	(62807.14, 11.8) & (565958.64, 112.7) & (22853.18, 8.13) & {\color{blue} (122.18, 0.06)}\\
500&	1000&	(267348.4, 227.73)&	(2301999.2, 2017.9)&	(91897.2, 151.3)&	{\color{blue}(177.0,0.34)}\\
1000	&2000&	(856072.0, 2739.47)&	(717508.8, 2331.06)&	(162726.9, 1010.66)&{\color{blue}(226.6, 1.6)}\\
\hline
\end{tabular}\label{new.table1}
}
\end{table}

\begin{table}[H]
\centering
\caption{{  Results for controlled condition instances, large $\epsilon$ ($\epsilon = 10^{-1}$), and no iteration limit}}
\begin{tabular}{|c|c|c|c|c|c|c|c|c|c|}

\hline
 $m$ & $n$ & perceptron & VN & VNA & smooth  \\
 \hline
100	&1000&	(14982.34, 15.64)&	(47942.98, 51.68)&	(3585.23, 6.97 )&{\color{blue}(127.93, 0.29)}\\
200&	1000&	(16037.40, 16.67)&	(57652.07, 62.43)&	(5152.81, 9.87)&	{\color{blue}(146.49, 0.33)}\\
800&	1000&	(9157.77, 8.8) & (892550.84, 905.88) & (7243.26, 13.11) & {\color{blue} (220.8, 0.47)}\\
900&	1000&	(1975.82, 1.93) &	(692402.25, 695.27) &	(1410.76, 2.46) &	{\color{blue}(199.8, 0.43)}\\
\hline
\end{tabular}\label{new.table2}
\end{table}

\subsection{Performance of the Enhanced Projection and Rescaling Algorithm}
\label{sec:experiments.mpra}
This section describes the performance of Algorithm~\ref{algo.MPRA} on two main sets of problem instances.  The first set contains  instances of $L = \ker(A)$ for $A\in \R^{m\times n}$ with $L \cap \R^n_{++} \ne \emptyset$  generated via the approach based on Proposition~\ref{prop.control.cn} as described in Section~\ref{sec.construct}. The second set of instances $L = \ker(A)$ is also generated via a similar approach but ensuring that both $\ri(L \cap \R^n_{+}) \ne \{0\}$ and $\ri
(L^\perp \cap \R^n_{+}) \ne \{0\}$. 
Most of these instances are sufficiently challenging that they cannot be solved by the basic procedure (via the smooth perceptron scheme) without rescaling.  

We ran Algorithm~\ref{algo.MPRA} with $U=10^{10}$ in all of our experiments.  Table~\ref{table.interior} displays the results for the first set of instances $L = \ker(A)$ with $L \cap \R^n_{++} \ne \emptyset$.  Each row corresponds to a set of 500 instances of $A\in \R^{m\times n}$ for  $m$ and $n$ as indicated in the first two columns.  The other three columns display the average number of rescaling iterations, average total number of basic procedure iterations, and average CPU time for each set of 500 instances.  Furthermore, we note that Algorithm~\ref{algo.MPRA} successfully solves all instances, that is, it terminates with a point $x\in L \cap \R^n_{++}$.  It is noteworthy that the number of rescaling iterations ranges from 9 to 15 across instances of different sizes.  To further illustrate this interesting fact, Figure~\ref{fig.rescaling} plots the number of rescaling iterations for some sets of instances.

\begin{table}[H]
\centering
\caption{{  Algorithm~\ref{algo.MPRA} on controlled condition instances with $L\cap \R^n_{++} \ne \emptyset$}}
\begin{tabular}{|c|c|c|c|c|}
\hline
$m$ & $n$& \makecell{Average \# of rescaling\\ iterations}&\makecell{Average total \# of\\ iterations}& \makecell{Average CPU time\\ (in seconds)}\\
\hline
100	  & 200  &	9.51   & 712.38   &	0.076\\
250  & 500  &	11.03 & 1419.98  &0.76\\
100  & 1000 &	7.97   & 1843.74 &	4.48\\
200  & 1000 &	9.00   & 1954.73 &	4.41\\
500  & 1000 &	11.92 & 2487.47 &	5.49\\
800  & 1000 &	13.08 & 4026.00 &	6.69\\
900  & 1000 &	11.63 & 4184.90 &	6.64\\
1000& 2000 &	12.18 & 4318.48 & 35.76\\
\hline
\end{tabular}\label{table.interior}
\end{table}\FloatBarrier

\begin{figure}
\centering
 \includegraphics[width=.7\textwidth] {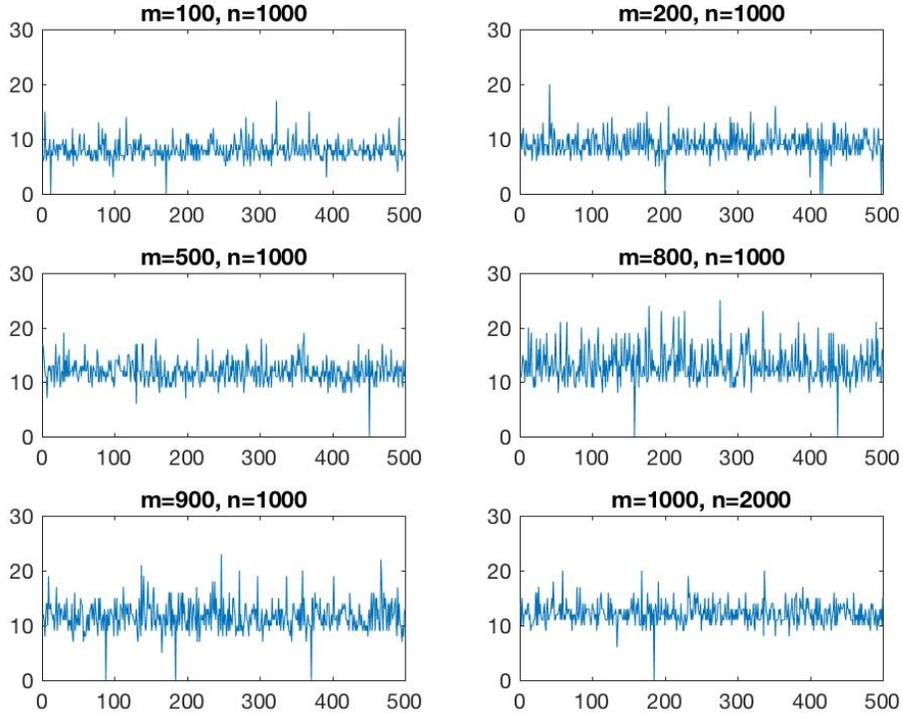}
\caption{Number of rescaling iterations for controlled condition instances $L$ with $L\cap\R^n_{++}\ne \emptyset.$}
\label{fig.rescaling}
\end{figure}

Table~\ref{table.partition} and Figure~\ref{fig.rescaling.part} display similar results for the second set of instances with both $\ri(L \cap \R^n_{+}) \ne \emptyset$ and $\ri(L^\perp \cap \R^n_{+}) \ne \emptyset$.  To accommodate for a wide and flexible range of dimensions of $\ri(L \cap \R^n_{+}) \ne \emptyset$ and $\ri(L^\perp \cap \R^n_{+}) \ne \emptyset$, for each fixed value of $n$ we construct $A\in \R^{m\times n}$ and $L=\ker(A)$ with varying values of $m$.  For this second set of instances we also report the {\em success rate}, that is, the percentage of instances where the the partition $(B,N)$ is correctly identified.  The algorithm succeeds in identifying this partition for most instances.  In the rare cases when this is not the case, failure occurs because either $B$ or $N$ are small and Algorithm~\ref{algo.MPRA} terminates with a point that is either in $L^\perp \cap \R^n_{++}$  or in $L \cap \R^n_{++}$ within roundoff error.
The experiments show that on this second set of instances a higher number of rescaling iterations is usually necessary.  This is somewhat expected as these instances include the extra difficulty of finding a non-trivial partition $(B,N)$ of $\{1,\dots,n\}.$

\begin{table}[H]
\centering
\caption{Algorithm~\ref{algo.MPRA} on  instances with 
$\ri(L\cap \R^n_{+}) \ne \{0\}$ and $\ri(L^\perp\cap \R^n_{+}) \ne \{0\}$}
\begin{tabular}{|c|c|c|c|c|c|}
\hline
$n$ & \makecell{Average\\$m$}& \makecell{Average \# \\ of rescaling\\ iterations}&\makecell{Average total \# \\ of iterations}& \makecell{Average \\ CPU time\\ (in seconds)} & \makecell{Success \\ rate}\\
\hline
100	&51.21&		16.16& 657.01&	0.075 & 0.946\\
200	&100.55&		17.27& 1031.45&	0.12 & 0.974\\
500&	249.64&		17.60 & 1763.59&	1.09& 0.984\\
800&	405.86&		17.72 & 2269.69&	3.52& 0.998\\
1000&	499.72&		17.76& 2625.84&	6.73& 0.994\\
2000&	1006.93&		17.59& 3752.04&	47.07& 0.994\\
\hline
\end{tabular}\label{table.partition}
\end{table}\FloatBarrier

\begin{figure}
\centering
 \includegraphics[width=.7\textwidth] {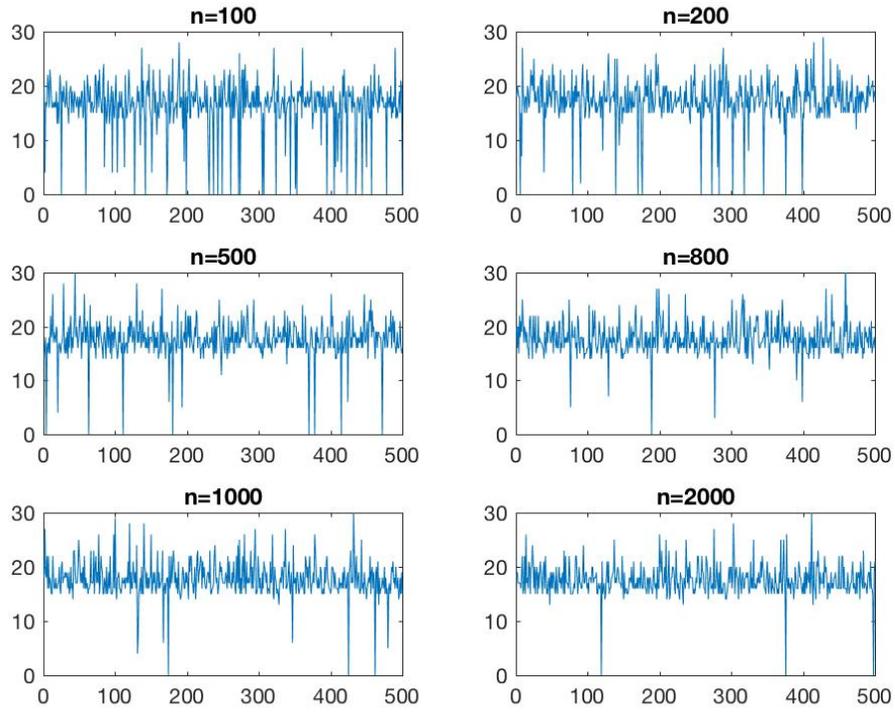}
\caption{Number of rescaling iterations for instances $L$ with $\ri(L\cap\R^n_{+})\ne \{0\}$ and $\ri(L^\perp\cap\R^n_{+})\ne \{0\}.$}
\label{fig.rescaling.part}
\end{figure}

To further illustrate the partition and the solutions  found by Algorithm~\ref{algo.MPRA}, Figure~\ref{fig:partitioninxands1k} and Figure~\ref{fig:partitioninxands2k} plot the coordinates of the points $x$ and $\hat x$ found by  Algorithm~\ref{algo.MPRA} for two representative instances of dimension $n = 1000$ and $n = 2000$ respectively. The two plots in the first row of Figure~\ref{fig:partitioninxands1k} and Figure~\ref{fig:partitioninxands2k} show the components of the points $x = (x_B, x_N)$ and $\hat x = (\hat x_B, \hat x_N)$ returned by Algorithm~\ref{algo.MPRA} for an instance of size $n=1000$ and for an instance of size $n=2000$. The set $B$ is $\{1,\ldots,424\}$ in the first instance and it is $\{1,\ldots,1137\}$ in the second instance. 
The large red circle in the plots show the size of $B$. 
For scaling purposes, in both instances the vectors $x$ and  
$\hat x$ are normalized so that $\|x\|_\infty = \|\hat x \|_\infty =1$.  As Figure~\ref{fig:partitioninxands1k} and Figure~\ref{fig:partitioninxands2k} show, in both cases the solutions $x$ and $\hat x$ satisfy the conditions in~\eqref{eq.relint}.

 \begin{figure}[h!]

\centering   
\subfloat[$x\in L\cap\R^n_{++}$]{\label{fig:xL1000}\includegraphics[width=0.3\textwidth]{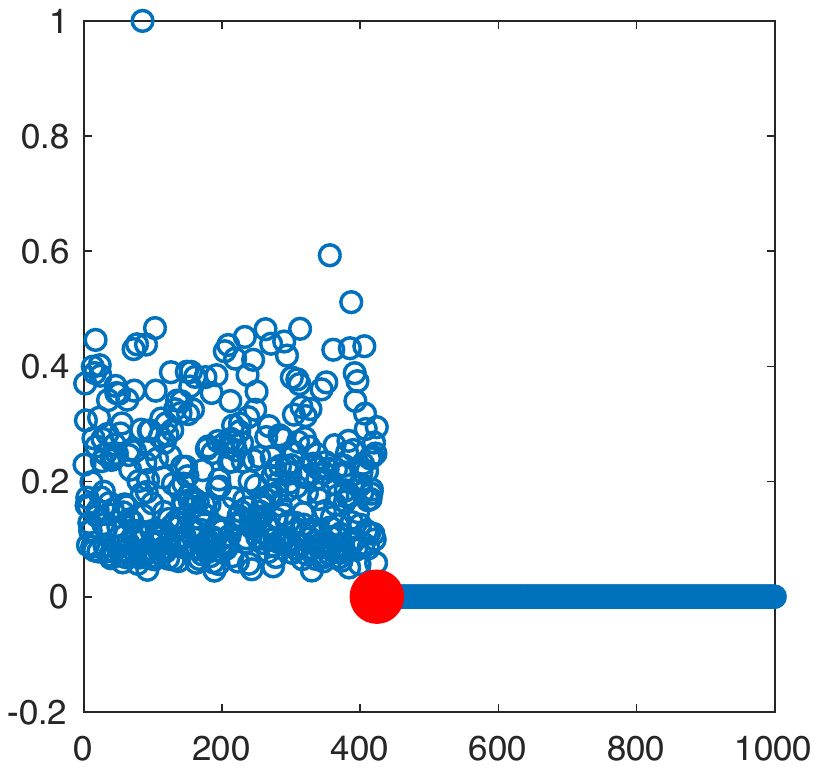}}
\hspace{1in}\subfloat [$\hat x\in L^\perp\cap\R^n_{++}$]{\label{fig:xLperp1000.png}\includegraphics[width=0.3\textwidth]{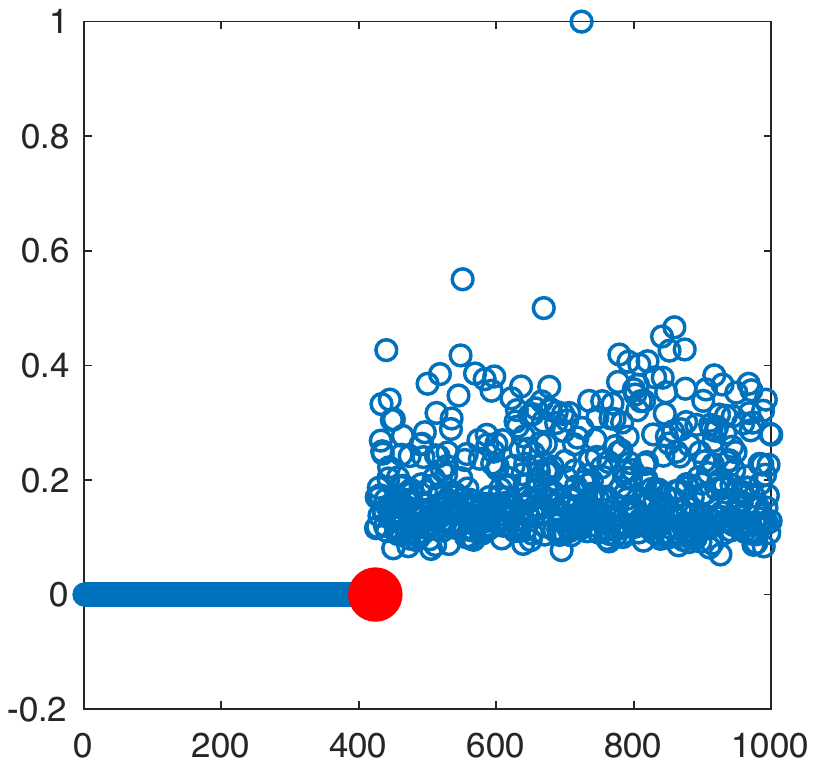}}

\caption{Coordinates of $x\in L\cap\R^n_{+}$ and $\hat x\in L^\perp\cap\R^n_{+}$ found by Algorithm~\ref{algo.MPRA} for some $L \subseteq \R^n$ and $n= 1000$.} \label{fig:partitioninxands1k}
\end{figure}\FloatBarrier

$\:$

 \begin{figure}[h!]

\centering   

\subfloat [$x\in L\cap\R^n_{++}$]{\label{fig:xL2000}\includegraphics[width=0.3\textwidth]{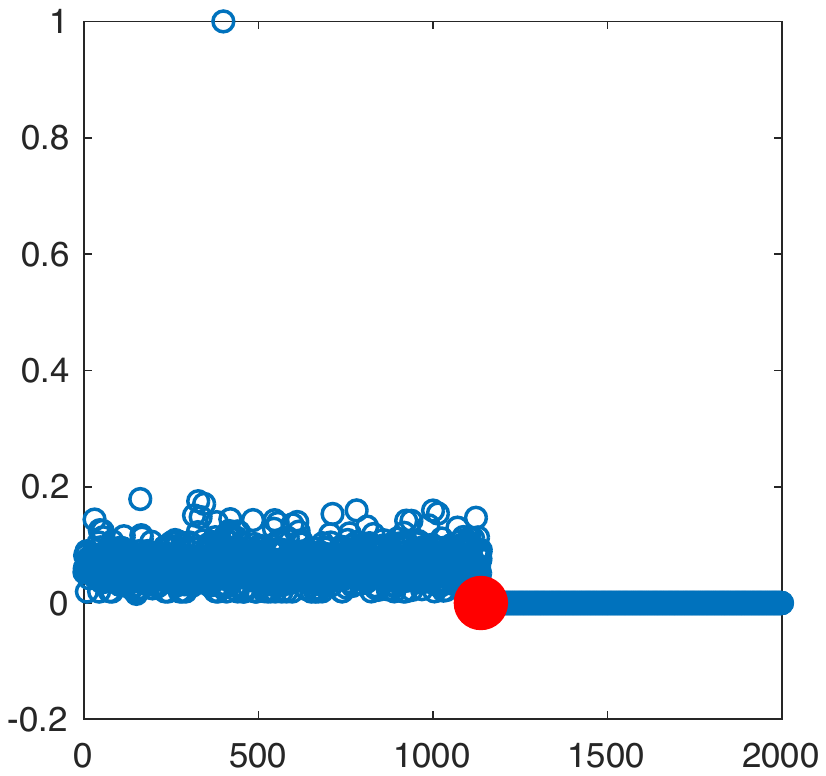}}
\hspace{1in}
\centering \subfloat [$\hat x\in L^\perp\cap\R^n_{++}$]{\label{fig:xLperp2000}\includegraphics[width=0.3\textwidth]{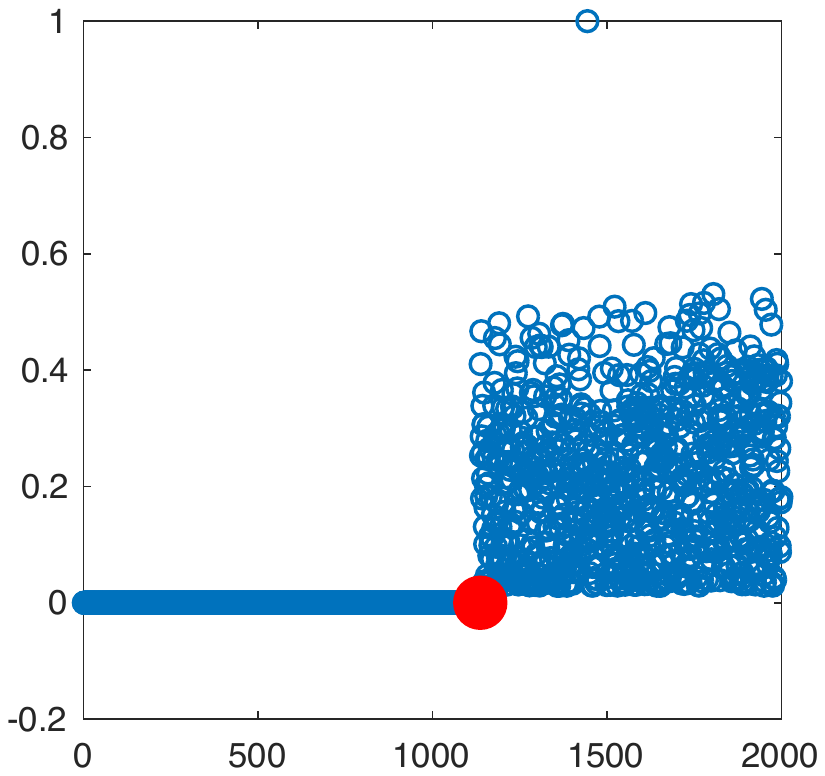}}

\caption{Coordinates of $x\in L\cap\R^n_{+}$ and $\hat x\in L^\perp\cap\R^n_{+}$ found by Algorithm~\ref{algo.MPRA} for some $L\subseteq \R^n$ and $n= 2000$.} \label{fig:partitioninxands2k}
\end{figure}\FloatBarrier

\subsection{Other experiments}

We also performed experiments  to assess the effect of rescaling along multiple directions, as in Algorithm~\ref{algo.MPRA}, versus rescaling only along one direction, as in the original Projection and Rescaling Algorithm in~\cite{PenaS16}.   
More precisely, we compare the performance of Algorithm~\ref{algo.MPRA} versus the modification obtained by changing the update on $D$ and $\hat D$ in Step 5 to  $D = \min\left(\left(I+\diag(e_i)\right)D, U\right)$ and $\hat D = \min\left(\left(I+\diag(e_j)\right)\hat D, U\right)$ where $i$ and $j$ are such that 
for $z_i = \|z\|_\infty$ and $\hat z_j = \|\hat z\|_\infty$.  In most instances the modified version that rescales along one direction  failed to find a solution within a reasonable number (a hundred) of rescaling iterations. 
We  note that~\cite[Section 6]{PenaS16} provides a closed-form formula to update the projection matrix $P$ at low cost after rescaling along one direction. The formula can be extended to handle multiple rescaling directions. However, the formula is computationally attractive only when the number of rescaling directions is small because it requires computing the spectral decomposition of a matrix with rank equal to the number of rescaling directions.   Our numerical experiments indicate that even using the closed-form formula in~\cite{PenaS16} does not compensate for the additional number of iterations required by the modified version of Algorithm~\ref{algo.MPRA} with a single rescaling direction.

\medskip

Table~\ref{table.interior.naive} provides a summary similar to that displayed on Table~\ref{table.interior} of the performance of Algorithm~\ref{algo.MPRA} on a set of naive random instances.  These instances were generated in the same way as those used in the experiments summarized Table~\ref{summaryTableeps1} and Table~\ref{summaryTableeps4}, namely, $L = \ker(A)$ where the entries of $A\in \R^{m\times n}$ are independently drawn from a standard normal distribution.  In contrast to the results summarized in Table~\ref{table.interior} for controlled condition instances, Algorithm~\ref{algo.MPRA} solves most instances easily without rescaling and after a much lower number of total basic iterations.  In particular, Algorithm~\ref{algo.MPRA} solves all naive random instances without rescaling when $m\ne n/2$ and only a few instances require a small number of rescaling steps when $m=n/2$.  For additional illustration of the latter fact, Figure~\ref{fig.rescaling.naive} plots the number of rescaling iterations for the naive random instances with $m=n/2$.
 The last column of Table~\ref{table.interior.naive} shows the fraction of instances where $L\cap \R^n_{++} \ne \emptyset$.  In contrast to the controlled condition instances, this is unknown for naive random instances.  The fraction of instances where $L\cap \R^n_{++} \ne \emptyset$ is consistent with~\eqref{eq.prob} and the subsequent discussion.

\begin{table}[H]
\centering
\caption{Algorithm~\ref{algo.MPRA} on naive random instances}
\begin{tabular}{|c|c|c|c|c|c|}
 \hline
$m$ & $n$& \makecell{Average \# \\ of rescaling\\ iterations}&\makecell{Average total \# \\of  iterations}& \makecell{Average \\ CPU time\\ (in seconds)} & 
\makecell{Fraction of instances \\ with $L\cap \R^n_{++} \ne \emptyset$}\\
\hline
100	  & 200  &	0.496   & 147.094   &	0.0213 & 0.5\\
250  & 500  &	0.338 & 257.660  &0.1428 & 0.496\\
100  & 1000 &	0 & 3.700 &	0.1574 & 1\\
200  & 1000 &	0 & 9.520 &	0.2008 & 1\\
500  & 1000 &	0.228 & 373.352 &	0.7646 & 0.502\\
800  & 1000 &	0   & 5.738 &	0.1175 & 0\\
900  & 1000 &	0   & 2.616 &	0.1156 & 0\\
1000& 2000 &	0.188 & 602.674 & 4.4935 & 0.482\\
\hline
\end{tabular}\label{table.interior.naive}
\end{table}\FloatBarrier

\begin{figure}
\centering
 \includegraphics[width=.7\textwidth] {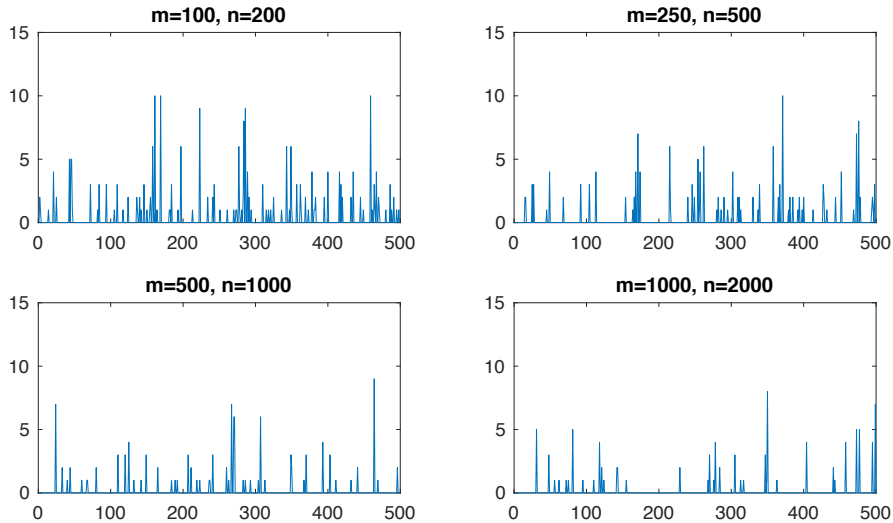}
\caption{Number of rescaling iterations for naive random instances}
\label{fig.rescaling.naive}
\end{figure}

We also compared the performance of  Algorithm~\ref{algo.MPRA} with the state-of-the-art commercial solver CPLEX.  Similar comparisons with Gurobi and MATLAB solvers are reported in~\cite{LiRT15,Roos18}.  Consistent with the reported results in~\cite{LiRT15,Roos18}, we observe that on average Algorithm~\ref{algo.MPRA} is faster than CPLEX by nearly an order of magnitude for problem instances where $L = \ker(A)$ with $A \in \R^{m \times n}$ generated naively at random as in~\cite{LiRT15,Roos18} and as in the set of experiments summarized in Table~\ref{table.interior.naive}.  On the other hand, the difference in speed is about the opposite, that is, CPLEX is nearly an order of magnitude faster when $A\in \R^{m\times n}$ is generated so that $L\cap\R^n_{++}$ has a controlled condition measure via the procedure suggested by Proposition~\ref{prop.control.cn} as in the set of experiments summarized in Table~\ref{table.interior}. We attribute this sharp difference to the fact that the naively generated instances are generally easier and can usually be solved within one single round of basic procedure and without the need for rescaling even when $m = n/2$ as Table~\ref{table.interior.naive} illustrates.  By contrast, instances with controlled condition measure, such as those in the set of experiments summarized in Table~\ref{table.interior}, are significantly more challenging and require on average ten or more rounds of basic and rescaling steps. We note that for similarly generated instances, the numerical experiments reported in~\cite{Roos18} generally require several rescaling iterations when $m = n/2$ while our algorithm solves most of these instances without any rescaling iterations. This difference is likely due to the different basic procedures used in~\cite{Roos18} and in our numerical experiments. The rescaling method in~\cite{Roos18} uses a variant of the von Neumann algorithm as its basic procedure while we use the smooth perceptron scheme.


\section{Concluding remarks}

We have described a computational implementation and numerical experiments of an  Enhanced Projection and Rescaling algorithm for finding most interior solutions to the feasibility problems 
\[  
\text{find}  \; x\in L\cap\R^n_{+} \;\;\;\; \text{ and } \; \;\;\;\;
\text{find} \; \hat x\in L^\perp\cap\R^n_{+},
\]
where $L$ denotes a linear subspace in $\R^n$ and $L^\perp$ denotes its orthogonal complement.  Our numerical results provide promising evidence of the effectiveness of this algorithmic approach.

The MATLAB code for our implementation is comprised of a set of  MATLAB functions with verbatim implementations of Algorithm~\ref{algo.MPRA} through Algorithm~\ref{alg:smooth}.  Our MATLAB code is publicly available at the following website

\begin{center} {\tt http://www.andrew.cmu.edu/user/jfp/epra.html} \end{center} 

{   The tables presented in this paper were created by averaging the results obtained from running the following MATLAB functions.  
\begin{itemize}
\item {\tt TestSimpleBasicProcedures(m,n,N,epsilon):} This is the code used to generate and test the set of instances summarized in each row of Table~\ref{summaryTableeps1}, Table~\ref{summaryTableeps4}, and Table~\ref{new.table1}.
\item {\tt TestControlledConditionBasicProcedures(m,n,N,epsilon,delta):} This is the code used to generate and test the set of instances summarized in each row of Table~\ref{summaryTableeps1.control}, Table~\ref{summaryTableeps4.control}, and Table~\ref{new.table2}.   
\item {\tt TestControlledConditionRescaled(m,n,N,delta):} This is the code used to generate
 and test the set of instances summarized in each row of Table~\ref{table.interior}.

\item {\tt TestPartitionRescaled(n,N):} This is the code used to generate  and test the set of instances summarized in each row of Table~\ref{table.partition}.

\item {\tt TestSimpleRescaled(m,n,N):} This is the code used to generate  and test the set of instances summarized in each row of Table~\ref{table.interior.naive}.  This code also compares the performance of Algorithm 1 with a modified version that rescales along one direction only including a more efficient update on the projection matrix after each rescaling step.
\end{itemize}
The input parameters for the above functions are as follows
\begin{itemize}
\item {\tt N:} Number of instances.  We used {\tt N = 1000} in Table~\ref{summaryTableeps1}
through
Table~\ref{summaryTableeps4.control}, and {\tt N = 100} in Table~\ref{new.table1} and Table~\ref{new.table2}.  We used {\tt N = 500} in Table~\ref{table.interior} through Table~\ref{table.interior.naive}.
\item {\tt m:} Number of rows of $A\in \R^{m\times n}$ such that $L = \ker(A)$
\item {\tt n:} Dimension of the ambient space
\item {\tt epsilon:} Rescaling condition parameter
\item {\tt delta:} Upper bound on the values of a subset of randomly chosen positive entries of the most central solution.  The smaller {\tt delta,} the more ill-conditioned the problem.
We used {\tt delta = 0.001} for the experiments summarized in Table~\ref{summaryTableeps1.control}, Table~\ref{summaryTableeps4.control}, 
Table~\ref{new.table2}, and Table~\ref{table.interior}. 
\end{itemize}
Algorithm~\ref{algo.MPRA} through Algorithm~\ref{alg:smooth} are implemented via the following MATLAB functions.
\begin{itemize}
\item {\tt MultiEPRA(A,AA,n,z0,U):} This code implements Algorithm~\ref{algo.MPRA}. Assume $L = \ker({\tt A})\subseteq \R^n$ and $L^\perp = \ker({\tt AA})$.  Use {\tt z0} as starting point for the basic procedure and {\tt U} to upper bound the rescaling matrices. 
\item {\tt perceptron(P,z0,epsilon):} This code implements Algorithm~\ref{alg:perceptron}.
\item {\tt VN(P,z0,epsilon):} This code implements Algorithm~\ref{alg:vonNeumann}.
\item {\tt VNA(P,z0,epsilon):} This code implements Algorithm~\ref{alg:vonNeumann.away}.
\item {\tt smooth(P,u0,epsilon):} This code implements Algorithm~\ref{alg:smooth}
\end{itemize}.

}

\bibliographystyle{plain}

\end{document}